\newtheorem{Thm}{Theorem}[section]
\newtheorem{Lemma}[Thm]{Lemma}
\newtheorem{Prop}[Thm]{Proposition}
\newtheorem{Cor}[Thm]{Corollary}
\newtheorem*{RecallThm}{Theorem}
\theoremstyle{definition}
\newtheorem{Def}[Thm]{Definition}
\newtheorem{Ex}[Thm]{Example}
\newtheorem*{Notation}{Notations \& Conventions}
\theoremstyle{remark}
\newtheorem{Rmk}[Thm]{Remark}
\numberwithin{equation}{section}
\newcommand{\PP}{\mathbb{P}}
\newcommand{\A}{\mathbb{A}}
\newcommand{\C}{\mathbb{C}}
\newcommand{\R}{\mathbb{R}}
\newcommand{\cL}{\mathcal{L}}
\newcommand{\cO}{\mathcal{O}}
\newcommand{\cF}{\mathcal{F}}
\newcommand{\cI}{\mathcal{I}}
\newcommand{\cV}{\mathcal{V}}
\newcommand{\mult}{\operatorname{mult}}
\newcommand{\Sing}{\operatorname{Sing}}
\newcommand{\Cone}{\operatorname{Cone}}
\newcommand{\Exc}{\operatorname{Exc}}
\newcommand{\id}{\operatorname{id}}
\newcommand{\red}{\text{red}}
\newcommand{\st}{\middle|}
\begin{document} 

\title[Crepant resolutions and non-Kodaira fibres]{Crepant resolutions of Weierstrass threefolds and non-Kodaira fibres}

\author{Andrea Cattaneo}
\address{Andrea Cattaneo, Dipartimento di Matematica ed Informatica, Universit\`a degli Stu\-di di Parma, Parco Area delle Scienze 53/A, 43124 Parma, Italy}
\email{andrea.cattaneo@unipr.it}

\thanks{The author is granted with a research fellowship by Istituto Nazionale di Alta Matematica INdAM, and is supported by the Project PRIN ``Variet\`a reali e complesse: geometria, topologia e analisi armonica'', by the Project FIRB ``Geometria Differenziale e Teoria Geometrica delle Funzioni'', and by GNSAGA of INdAM. He wants to gratefully acknowledge all the people who helped him during the preparation of this paper, in particular M.\ Esole, M.\ Mella and B.\ van Geemen for interesting discussions and comments.}


\begin{abstract}
In this paper we want to study the non-Kodaira fibres in a smooth equidimensional elliptic threefold. If the morphism to the Weierstrass model of the fibration is crepant, then we can locate the non-Kodaira fibres and give a description of their structure. In particular, they lie over the singular points of the (reduced) discriminant of the fibration and are contraction of a Kodaira fibre, whose type can be predicted using Tate's algorithm.
\end{abstract}

\subjclass[2010]{Primary 14J30; Secondary 14J17, 14E15.}

\keywords{Elliptic fibrations, elliptic threefolds, non-Kodaira fibres, crepant resolutions.}

\maketitle

\section*{Introduction}

The study of elliptic fibrations, i.e.\ morphisms $\pi: X \longrightarrow B$ whose generic fibre is an elliptic curve, dates back to Kodaira's paper \cite{Kodaira}, where one can find a detailed description of elliptic surfaces. Since then, the case of elliptic surfaces has been a benchmark for generalizations to higher dimension. Much of what is true for surfaces generalizes to the higher dimensional case, but many of these results are weaker: for example, what is true \emph{punctually} on the base curve in the case of surfaces becomes true only \emph{for the generic point of each irreducible component of a suitable codimension $1$ locus} for higher dimensional varieties.\\
An important result of Kodaira is the classification of the singular fibres which can occur in a minimal elliptic surface (cf.\ \cite[Thm. 6.2]{Kodaira}), which we call \emph{Kodaira fibres}. In this case there is a generalization to the higher dimensional case, and Kodaira's classification works over the generic point of each irreducible component of the discriminant locus of the fibration, but there appear also new non-Kodaira fibres: see e.g.\ \cite{Miranda}, \cite{GrassiMorrison-nonKodaira} or the more recent \cite{CCvG}, \cite{EsoleYau} and \cite{EsoleYauD5}.\\
Despite the great abundance of examples of non-Kodaira fibres in the previous papers, less is known on their structure and a complete classification is still far away. In this paper we will show that for the elliptic fibrations which are crepant resolutions of their Weierstrass model there is still a link with the Kodaira fibres: in this case the non-Kodaira fibres are contraction of Kodaira fibres, a fact which was pointed out by Miranda in some particular case in \cite[$\S$14]{SME3-f}.

The structure of the paper is as follows: in Section~\ref{sect: gor and can sings} and Section~\ref{section: elliptic fibration} we will recall the basic definitions and facts concerning the theory of singularities and elliptic fibrations respectively which will be used in the subsequent Sections. These two Sections are mainly expository, and we use them for references to keep this paper as self-contained as possible. Our main references are \cite{C3-f}, \cite{Pagoda}, \cite{YoungPerson}, \cite{Kollar-Mori} for Section~\ref{sect: gor and can sings} and \cite{Nakayama}, \cite{Miranda}, \cite{SME3-f} for Section~\ref{section: elliptic fibration}.

In Section~\ref{sect: crepant resolutions of weierstrass threefolds} we will address to the main topic of this paper: the study of the structure of non-Kodaira fibres in a smooth equidimensional elliptic threefold which is a crepant resolution of its Weierstrass model. As it will be explained in the introduction of Section~\ref{sect: crepant resolutions of weierstrass threefolds}, this is a class of threefolds it is quite natural to deal with, especially in some application. Our main Theorem (Theorem~\ref{thm: main theorem}) gives a partial classification of non-Kodaira fibres of a smooth equidimensional elliptic threefold in the context of biregular geometry:

\begin{RecallThm}
Let $\pi: X \longrightarrow B$ be a smooth equidimensional elliptic threefold, with morphism to the Weierstrass model $f: X \longrightarrow W$. If $f$ is a crepant resolution of $W$, then
\begin{enumerate}
\item $W$ has only $cDV$ singularities;
\item $f$ can not be factored as
\[\xymatrix{X \ar[dr]_\varphi \ar[rr]^f & & W\\
 & X' \ar[ur]_{f'} & }\]
where $\varphi$ is the contraction of some divisor $D \subseteq X$ and $X'$ is smooth;
\item if $X_b$ is a non-Kodaira fibre, then $b \in \Sing (\Delta(\pi)_\red)$;
\item if $X_b$ is a non-Kodaira fibre, then $X_b$ is a contraction of a Kodaira fibre, whose type can be determined by Tate's algorithm.
\end{enumerate}
\end{RecallThm}

In \cite{SME3-f}, Miranda gave a classification of all the possible non-Kodaira fibres of a smooth equidimensional elliptic threefold up to birational isomorphisms. Our main Theorem is compatible with Miranda's results (cf.\ \cite[$\S$14]{SME3-f}). The birational geometry of equidimensional elliptic threefolds was studied also in \cite{Grassi-Equidimensional}.\\
Finally, in Section~\ref{sect: examples} we will give some examples, some of which were the original motivation for this paper.

\begin{Notation}
Through all the paper, we will use the following notations.
\begin{enumerate}
\item All the varieties are defined over $\C$.
\item We will use a solid arrow $Y \longrightarrow X$ to denote \emph{morphisms} of varieties, and a dashed arrow $Y \dashrightarrow X$ to denote \emph{rational maps}. Hence, a \emph{birational morphism} is a morphism $Y \longrightarrow X$ which need not to be an isomorphism, but which has a birational inverse $X \dashrightarrow Y$.
\item For an elliptic fibration (see Definition~\ref{def: elliptic fibration}) $\pi: X \longrightarrow B$, we will always assume that $B$ is smooth, and we will mainly be interested in the case where $X$ is smooth as well. We will slightly abuse the notation saying that an elliptic fibration $\pi: X \longrightarrow B$ is smooth or singular if its total space $X$ is.
\item Given a vector bundle $\cV$ over $B$, we will denote by $\PP(\cV)$ the projective bundle of lines in $\cV$.
\end{enumerate}
\end{Notation}

\tableofcontents

\section{Gorenstein and canonical singularities}\label{sect: gor and can sings}

In this Section we will recall some definitions and results from singularity theory, in particular the definitions and properties of rational Gorenstein singularities (Section~\ref{sect: rational gorenstein sings}) and of canonical singularities (Section~\ref{sect: canonical singularities}). Finally, we will recall some useful facts on threefolds with $cDV$ singularities and their resolutions (Section~\ref{sect: resolving the du val locus}) which will be used in Section~\ref{sect: crepant resolutions of weierstrass threefolds}.

We start recalling the definition of Gorenstein varieties, which is the class of varieties we will deal with.

\begin{Def}
Let $X$ be a projective variety. We say that $X$ is \emph{Gorenstein} if it is Cohen--Macaulay, and its dualizing sheaf $\omega_X$ is locally free, i.e.\ it is a line bundle.
\end{Def}

In terms of a canonical Weil divisor $K_X$ for $X$, the condition that $\omega_X$ is locally free is equivalent to the condition that $K_X$ is a Cartier divisor.

\begin{Ex}\label{ex: gorenstein varieties}
Let $X$ be a smooth variety, and $Y \subseteq X$ a complete intersection. Then $Y$ is Gorenstein, since it is Cohen--Macaulay by \cite[Cor.\ III.4.5]{Altman-Kleiman} and $\omega_Y$ is a line bundle by \cite[Thm.\ III.7.11]{HAG}.
\end{Ex}

We will make a frequent use of the following definitions from singularity theory.

\begin{Def}\label{def: sing theory}
Let $X$ and $Y$ be varieties with $X$ normal, and $f: Y \longrightarrow X$ a morphism.
\begin{enumerate}
\item We say that $f$ is a \emph{partial resolution} if it is a proper birational morphism, with $Y$ normal.
\item We say that $f$ is a \emph{resolution} of $X$ if $f$ is a partial resolution with $Y$ smooth. If $X$ is singular, a resolution of $X$ is called also a \emph{resolution of the singularities}.
\item If $f$ is a partial resolution, we say that it is \emph{crepant} if $f^* \omega_X = \omega_Y$.
\item The \emph{exceptional locus} $\Exc f$ of a birational morphism $f$ is the (closed) subset of points $y \in Y$ where $f$ is not an isomorphism at $f(y)$. If $f$ is a partial resolution, we say that it is \emph{small} if $\Exc f$ has codimension at least $2$ in $Y$.
\item If $f$ is a resolution, we say that it is \emph{minimal} if any other resolution $f': Y' \longrightarrow X$ of $X$ factors through $f$.
\end{enumerate}
\end{Def}

\subsection{Rational Gorenstein singularities}\label{sect: rational gorenstein sings}

Now we want to address our attention to the singularities of Gorenstein varieties: the idea is that since they are close to smooth varieties, their singularities should be mild.

\begin{Def}[{cf.\ \cite[Def.\ 2.4]{C3-f}}]
Let $X$ be a Gorenstein variety. We say that $X$ has \emph{rational Gorenstein singularities} if there exists a resolution $f: Y \longrightarrow X$ of $X$ such that $f_* \omega_Y = \omega_X$.\\
We say that $P \in X$ is an \emph{elliptic Gorenstein singularity} if there exists a resolution $f: Y \longrightarrow X$ such that $f_* \omega_Y = \cI_P \cdot \omega_X$, where $\cI_P$ is the ideal sheaf defining $P$.
\end{Def}

Such singularities were studied in \cite[$\S$2]{C3-f}, where the following Proposition is proved.

\begin{Prop}[{cf.\ \cite[Thm.\ 2.6]{C3-f}}]\label{prop: properties of rational gorenstein singularities}
Let $X$ be an $n$-dimensional Gorenstein variety, with $n \geq 2$, and $P \in X$.
\begin{enumerate}
\item If $P$ is a rational Gorenstein point, then for a general hyperplane section $H$ through $P$, $P \in H$ is elliptic or rational Gorenstein.
\item If there exists a hyperplane section $H$ through $P$ such that $P \in H$ is a rational Gorenstein singularity, then $P \in X$ is a rational Gorenstein singularity.
\end{enumerate}
\end{Prop}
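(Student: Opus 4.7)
The plan is to compare the canonical sheaves of $U$ and $H$ via a resolution. Pick a resolution $f : Y \to U$ of a neighborhood $U$ of $P$ and a generic hyperplane section $H$ of $U$ through $P$. By a Bertini argument applied to the pullback of the linear system of hyperplanes through $P$, for generic $H$ the proper transform $\tilde H \subset Y$ is smooth and $f|_{\tilde H} : \tilde H \to H$ is a resolution of $H$; write $f^* H = \tilde H + E_H$ with $E_H$ effective and exceptional.

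Adjunction on the smooth variety $Y$, together with the Grauert--Riemenschneider vanishing $R^1 f_* \omega_Y = 0$, produces
\[0 \to f_* \omega_Y \to f_* \omega_Y(\tilde H) \to (f|_{\tilde H})_* \omega_{\tilde H} \to 0\]
on $U$ by pushing forward the adjunction sequence $0 \to \omega_Y \to \omega_Y(\tilde H) \to \omega_{\tilde H} \to 0$. Adjunction on the Gorenstein variety $U$ gives $0 \to \omega_U \to \omega_U(H) \to \omega_H \to 0$. The inclusion $\omega_Y(\tilde H) \hookrightarrow \omega_Y(f^* H)$ and the projection formula yield $f_* \omega_Y(\tilde H) \hookrightarrow f_* \omega_Y \otimes \cO_U(H) \subseteq \omega_U(H)$, so the two sequences fit into a commutative diagram with vertical inclusions and a natural right-hand inclusion $(f|_{\tilde H})_* \omega_{\tilde H} \hookrightarrow \omega_H$.

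For part (1), the hypothesis $f_* \omega_Y = \omega_U$ gives $f_* \omega_Y(\tilde H) \hookrightarrow \omega_U(H)$, and the snake lemma identifies $\omega_H/(f|_{\tilde H})_* \omega_{\tilde H}$ with the skyscraper $\omega_U(H)/f_* \omega_Y(\tilde H)$ at $P$. Writing $\omega_Y = f^* \omega_U \otimes \cO_Y(K_{Y/U})$ and applying the projection formula identifies this skyscraper, up to twist by the invertible $\omega_U(H)$, with $\cO_U/f_* \cO_Y(K_{Y/U} - E_H)$, whose length records the excess of $E_H$ over the relative canonical $K_{Y/U}$ along the exceptional fibre. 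Genericity of $H$, and in particular $\mult_P H = 1$, forces this length to be $0$ or $1$, corresponding to rational or elliptic Gorenstein at $P \in H$ respectively. For part (2), the hypothesis $(f|_{\tilde H})_* \omega_{\tilde H} = \omega_H$ kills the right-hand cokernel of the snake lemma, yielding an isomorphism between the cokernels $\omega_U/f_* \omega_Y$ and $\omega_U(H)/f_* \omega_Y(\tilde H)$; twisting the latter by $\cO_U(-H)$ and using the Gorenstein property of $U$ produces a natural surjection of the latter onto the former, and the length equality then forces both skyscrapers to vanish, giving $f_* \omega_Y = \omega_U$ and hence $P \in X$ rational Gorenstein.

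The main obstacle is the length bound in part (1): one must translate the condition $\mult_P H = 1$ into effective estimates for the valuations of a local equation of $H$ along each exceptional divisor $E_i \subset f^{-1}(P)$, and compare these with the discrepancies $a_i$ appearing in $K_{Y/U} = \sum a_i E_i$. The crucial observation is that a sufficiently generic hyperplane through $P$ realises the minimal orders of vanishing along each exceptional component simultaneously, so that the defect $E_H - K_{Y/U}$ contributes at most one to the length of the skyscraper at $P$.
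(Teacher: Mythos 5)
First, note that the paper itself offers no proof of this proposition: it is quoted verbatim from Reid's \emph{Canonical 3-folds} (Theorem 2.6 of \cite{C3-f}), and Section 2 states explicitly that proofs there are omitted. So your attempt can only be measured against Reid's argument, which your part (1) does follow in outline: the adjunction sequences on $Y$ and on $U$, Grauert--Riemenschneider vanishing, and the snake lemma are exactly the right skeleton. But the step you yourself flag as ``the main obstacle'' is the entire content of part (1), and the way you motivate it is off target. The relevant input is not $\mult_P H = 1$: the coefficients $m_i$ of $E_H = \sum m_i E_i$ can be arbitrarily large. What makes the cokernel short is that (i) $f_*\omega_Y = \omega_U$ with $\omega_U$ invertible forces $K_{Y/U} = \sum a_i E_i \geq 0$, and (ii) for generic $H$ through $P$ one has $m_i = v_{E_i}(\mathfrak{m}_P) = \min_{g \in \mathfrak{m}_P} v_{E_i}(g)$, so every $g \in \mathfrak{m}_P$ satisfies $v_{E_i}(g) \geq m_i \geq m_i - a_i$. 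Hence $\mathfrak{m}_P \subseteq f_*\cO_Y(K_{Y/U} - E_H)$, the cokernel $\omega_H/(f|_{\tilde H})_*\omega_{\tilde H}$ is annihilated by $\mathfrak{m}_P$, and being a quotient of the length-one module $\omega_H/\mathfrak{m}_P\omega_H$ it is either $0$ (rational) or equal to it, giving $(f|_{\tilde H})_*\omega_{\tilde H} = \mathcal{I}_P\cdot\omega_H$ (elliptic). Without this one-line valuation computation your part (1) is an assertion, not a proof.

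Part (2) has a genuine gap. Granting $C_3 = \omega_H/(f|_{\tilde H})_*\omega_{\tilde H} = 0$, the snake lemma does give $C_1 = \omega_U/f_*\omega_Y \cong C_2 = \omega_U(H)/f_*\omega_Y(\tilde H)$, and twisting by $\cO_U(-H)$ gives a surjection $C_2(-H) \twoheadrightarrow C_1$ whose kernel is $f_*\omega_Y/f_*\bigl(\omega_Y(-E_H)\bigr)$. The length equality (where it makes sense) kills that \emph{kernel}, not $C_1$: you only obtain $f_*\omega_Y = f_*\bigl(\omega_Y(-E_H)\bigr)$, which is far from $f_*\omega_Y = \omega_U$. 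Moreover $C_1$ is supported on the whole non-rational locus of $U$, which may be a positive-dimensional set through $P$, so it is not a skyscraper and the length bookkeeping is not even well posed without a prior reduction. This direction is genuinely harder: the standard proof (and the one underlying Reid's statement) invokes Elkik's theorem that if a Cartier divisor $H \subseteq X$ has rational singularities at $P$ then $X$ has rational singularities near $P$, combined with the fact that for a Gorenstein variety rational is equivalent to rational Gorenstein. That external input, or a substitute for it, is missing from your argument, and the elementary diagram chase you propose does not close the gap.
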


In the rest of this Section we want to give some examples of such singularities, in particular when $X$ is a surface or a threefold.

\begin{Ex}[Rational Gorenstein surface singularities]\label{ex: surface rational gorenstein singularities}
Let $X$ be a projective surface and $P \in X$ be a point. Then the following are equivalent (cf.\ \cite{Dufree}):
\begin{enumerate}
\item $P$ is a rational Gorenstein singularity.
\item There exists a resolution of singularities $f: Y \longrightarrow X$ which is crepant in a neighbourhood of $P$.
\item In a neighbourhood of $P$ we have that $X$ is analytically isomorphic to a \emph{Du Val singularity}, i.e.\ one of the hypersurface singularities of $\A^3$ defined by $f(x, y, z) = 0$, where $f$ is an equation from Table~\ref{table: du val singularities}.
\end{enumerate}

\begin{center}
\begin{longtable}{|c|c|}
\caption{List of Du Val singularities}
\label{table: du val singularities}\\
\hline
Name & Equation\\
\hline
\hline
$A_n$ & $x^2 + y^2 + z^{n + 1} \quad (n \geq 1)$\\
\hline
$D_n$ & $x^2 + y^2 z + z^{n - 1} \quad (n \geq 4)$\\
\hline
$E_6$ & $x^2 + y^3 + z^4$\\
\hline
$E_7$ & $x^2 + y^3 + y z^3$\\
\hline
$E_8$ & $x^2 + y^3 + z^5$\\
\hline
\end{longtable}
\end{center}

The names of the Du Val singularities refer to the Dynkin diagrams. In fact in the minimal resolution of a Du Val singularity, the exceptional divisors have the corresponding Dynkin diagram as incidence graph. The minimal resolution of a Du Val singularity can be obtained by a sequence of blow ups in the singular points, which ends as soon as the blown up surface becomes smooth. The exceptional curves introduced are all $(-2)$-curves.
\end{Ex}

\begin{Ex}[Elliptic Gorenstein surface singularities]\label{ex: surface elliptic gorenstein singularities}
Let $P$ be an elliptic Gorenstein singularity of the surface $X$, and denote by $f: Y \longrightarrow X$ its minimal resolution. Then $f^{-1}(P) = \cup_{i = 1}^m A_i$, and we can associate to $f$ a (unique) cycle $Z$ in $Y$ which is effective, satisfies $Z \cdot A_i \leq 0$ for all $i = 1, \ldots, m$ and which is minimal with respect to these two properties. Such a cycle is called the \emph{fundamental cycle} of $f$, and can be computed as follows (compare \cite[p.\ 1259]{Laufer}): start with $Z_1 = A_{i_1}$ arbitrary, and then define inductively $Z_j = Z_{j - 1} + A_{i_j}$ such that $A_{i_j} \cdot Z_{j - 1} > 0$ until we end with $Z = Z_l$. The integer $k = -Z^2$ is a useful invariant of the elliptic Gorenstein surface singularity (cf.\ \cite[Prop.\ 2.9]{C3-f}). Not all the exceptional curves introduced are $(-2)$-curves.
\end{Ex}

It follows from Proposition~\ref{prop: properties of rational gorenstein singularities} that the singularities we will now define are threefold rational Gorenstein singularities.

\begin{Def}
Let $X$ be a Gorenstein threefold and $P \in X$. If the general surface through $P$ has a rational singularity in $P$, i.e.\ a Du Val singularity, then we say that $P \in X$ is a \emph{compound Du Val singularity}, or $cDV$ singularity for short.
\end{Def}

In this case, in a suitable neighbourhood of $P$, we can see $X$ as a deformation of a Du Val singularity: the definition is equivalent (cf.\ \cite[Def.\ 2.1]{C3-f}) to ask that around $P$ the variety $X$ is locally analytically isomorphic to the hypersurface singularity in $\A^4$ given by
\[f(x, y, z) + t \cdot g(x, y, z, t) = 0,\]
where $f(x, y, z) = 0$ defines a Du Val singularity (see Table~\ref{table: du val singularities}). Observe also that while Du Val singularities are isolated, $cDV$ singularities can be isolated or not.

If instead the general surface through $P$ has an elliptic Gorenstein surface singularity, we can use the invariant $k$ introduced in Example~\ref{ex: surface elliptic gorenstein singularities} to classify them. We can summarize the classification in the following Proposition:

\begin{Prop}[{cf.\ \cite[Cor.\ 2.10]{C3-f}}]\label{prop: classification of rational gorenstein threefold singularities}
To a rational Gorenstein threefold singularity $P \in X$ one can attach an integer $k \geq 0$ such that
\begin{enumerate}
\item $k = 0$ if $P$ is a $cDV$ singularity;
\item $k \geq 1$ if the general surface $H$ through $P$ has an elliptic Gorenstein singularity whose fundamental cycle has self-intersection $-k$ (cf.\ Example~\ref{ex: surface elliptic gorenstein singularities}).
\end{enumerate}
In particular, if $k = 1$ then $P \in X$ is locally analytically isomorphic to the singularity in $\A^4$ defined by
\[y^2 = x^3 + f_1(s, t) x + f_2(s, t)\]
where $f_1$ is a sum of monomials of degree at least $4$ and $f_2$ is a sum of monomials of degree at least $6$.
\end{Prop}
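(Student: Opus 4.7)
The strategy is to reduce everything to the surface theory summarized in Examples~\ref{ex: surface rational gorenstein singularities} and~\ref{ex: surface elliptic gorenstein singularities}. By Proposition~\ref{prop: properties of rational gorenstein singularities}(1), a generic hyperplane section $H$ through $P$ has at $P$ either a rational (i.e.\ Du Val) or an elliptic Gorenstein singularity. In the first case I set $k=0$, and by Definition~\ref{def: cdv singularity} this is precisely the \cDV{} case, giving~(1). In the second case I invoke Example~\ref{ex: surface elliptic gorenstein singularities} to attach to $(H,P)$ its fundamental cycle $Z$ and set $k=-Z^2\geq 1$, giving~(2). That $k$ is well-defined amounts to showing that $-Z^2$ is constant as $H$ ranges over the irreducible family of generic hyperplane sections through $P$; this I would deduce from a semicontinuity argument on minimal resolutions of surface germs in a flat family, which forces the dual graph (and hence the fundamental cycle and its self-intersection) to stabilize on a Zariski-open subset.

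For the explicit form in the case $k=1$, the key step is to show that the generic surface section of $X$ at $P$ is a simple-elliptic singularity of type $\tilde{E}_8$ (whose exceptional divisor is a single smooth elliptic curve with self-intersection $-1$), and that such a germ is analytically a hypersurface in $\A^3$ with Weierstrass normal form
\[
y^2 = x^3 + g_1(z)\,x + g_2(z), \qquad \operatorname{ord}_0 g_1 \geq 4, \quad \operatorname{ord}_0 g_2 \geq 6.
\]
From this one deduces, using that $X$ near $P$ is a one-parameter Gorenstein deformation of the above surface germ (the extra parameter being a coordinate transverse to $H$), that the embedding dimension of $X$ at $P$ is at most $4$, so $X$ is locally analytically a hypersurface in $\A^4$. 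Choosing coordinates $(x,y,s,t)$ so that the slice at a generic value of $(s,t)$ recovers the $\tilde{E}_8$ Weierstrass equation, and applying Weierstrass preparation in $y$ and $x$ to eliminate the monomials of lower order, then yields the desired form
\[
y^2 = x^3 + f_1(s,t)\,x + f_2(s,t),
\]
with the bounds $\operatorname{ord}_{(0,0)} f_1 \geq 4$ and $\operatorname{ord}_{(0,0)} f_2 \geq 6$ forced by the requirement that the central slice be the given $\tilde{E}_8$ singularity.

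The main obstacle is this last normal-form statement: one needs both to identify the simple-elliptic type $\tilde{E}_8$ as the only possibility for the generic section when $k=1$ (a fact relying on the detailed classification of minimally elliptic Gorenstein surface singularities with $Z^2=-1$), and to control the embedding dimension of $X$ at $P$ so that $X$ actually lives as a hypersurface in $\A^4$ rather than in some larger affine space. Both of these are deformation-theoretic statements that rest on the analysis in \cite{C3-f}. The well-definedness of $k$ is comparatively routine but still requires a careful semicontinuity argument for resolution invariants of surface germs in the family of generic hyperplane sections.
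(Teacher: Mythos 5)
First, a point of reference: the paper itself offers no proof of this statement --- Section 2 is explicitly expository and the proposition is quoted from Reid's \emph{Canonical 3-folds} (\cite[$\S$2, Cor.\ 2.10]{C3-f}) --- so your attempt can only be measured against that source. Your skeleton is the right one: reduce to the rational/elliptic dichotomy for the general hyperplane section via Proposition \ref{prop: properties of rational gorenstein singularities}, identify the rational case with the \cDV\ case via Definition \ref{def: cdv singularity}, and set $k=-Z^2$ using the fundamental cycle of Example \ref{ex: surface elliptic gorenstein singularities}. Up to that point the argument agrees in outline with Reid's.

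The genuine gap is in the $k=1$ normal form, precisely at the step you yourself flag as the ``main obstacle''. You claim that $k=1$ forces the general section to be a simple-elliptic singularity of type $\tilde{E}_8$, with exceptional divisor a single smooth elliptic curve of self-intersection $-1$. That is false: elliptic Gorenstein (minimally elliptic) surface singularities with $Z^2=-1$ also include cusps whose exceptional cycle is a cycle of rational curves, degenerate cases where the fundamental cycle is a nodal or cuspidal rational curve, and triangle singularities such as $y^2=x^3+z^7$, whose minimal resolutions contain only rational curves. This is not a pedantic distinction, because such sections genuinely occur: the threefold point $y^2=x^3+s^7+t^7$ is canonical of index $1$ (it admits the weight $(1,1,2,3)$ with weighted order $6<1+1+2+3$), hence rational Gorenstein with $k=1$, yet its general hyperplane section is the triangle singularity $y^2=x^3+cu^7$, not $\tilde{E}_8$. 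Since your control of the multiplicity and embedding dimension of $P\in X$ --- and hence the claim that $X$ sits as a hypersurface in $\A^4$ --- is derived from this misidentification, the chain breaks at its crucial link. The correct route (Reid's Prop.\ 2.9, the result already invoked in Example \ref{ex: surface elliptic gorenstein singularities}) computes $\mult = \max(k,2)$ and $\dim\mathfrak{m}/\mathfrak{m}^2=\max(k,3)$ for an elliptic Gorenstein surface singularity by vanishing/Riemann--Roch on the resolution, with no appeal to a classification of exceptional fibres; for $k=1$ this immediately gives a hypersurface double point, whence embedding dimension $4$ for the threefold. Your Weierstrass-preparation endgame (prepare in $y$, then in $x$, kill the $x^2$ term by a Tschirnhaus substitution, and read off $\mult f_1\geq 4$, $\mult f_2\geq 6$ from the fact that the generic line section is not Du Val, i.e.\ from Proposition \ref{prop: minimal elliptic surfaces}) is essentially right and salvageable once the double-point step is justified this way. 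Finally, the well-definedness of $k$ via ``semicontinuity of the dual graph'' is overstated --- the dual graph of the general section is not constant in general; only $Z^2$ needs to be, and that follows more cleanly from its identification with intrinsic invariants such as $\mult_P X$ and the embedding dimension.
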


\subsection{Canonical singularities}\label{sect: canonical singularities}
In this Section we will recall the definition of canonical singularities and some of their properties. Then we will focus on the link between canonical and $cDV$ singularities in the case of threefolds.

\begin{Def}[{cf.\ \cite[Def.\ 1.1]{YoungPerson}}]
Let $X$ be a (quasi-)projective normal variety. We say that $X$ has \emph{canonical singularities} if
\begin{enumerate}
\item\label{item: index of singularity} there exists an integer $r \geq 1$ such that the Weil divisor $r K_X$ is Cartier;
\item there exists a resolution $f: Y \longrightarrow X$ such that
\[r K_Y = f^*(r K_X) + \sum a_i E_i, \qquad a_i \geq 0,\]
where $r$ is as in point~\eqref{item: index of singularity} and $\{ E_i \}$ is the family of all the irreducible exceptional divisors of $f$.
\end{enumerate}
We say that $X$ has \emph{terminal singularities} if there exists a resolution of $X$ as above, such that $\{ E_i \} \neq \varnothing$ and $a_i > 0$ for every exceptional divisor $E_i$.
\end{Def}

For a point $P \in X$, the smallest $r$ for which point~\eqref{item: index of singularity} holds in a neighbourhood of $P$ is called the \emph{index} of $P$ in $X$. A divisor $E_i$ for which $a_i = 0$ is called a \emph{crepant divisor}, the others are called \emph{discrepant} (cf.\ with Definition~\ref{def: sing theory}).

\begin{Prop}[{cf.\ \cite[Cor.\ 1.11]{Pagoda}}]
On a threefold, the singularities of $cDV$ type are canonical (and of index $1$).
\end{Prop}
\begin{proof}
By definition, a variety is Cohen--Macaulay and canonical of index $1$ if and only if it has rational Gorenstein singularities. Then the result follows since $cDV$ singularities are threefold rational Gorenstein singularities (cf.\ Proposition~\ref{prop: properties of rational gorenstein singularities}).
\end{proof}

\begin{Prop}[{cf.\ \cite[Cor.\ 1.12]{Pagoda}}]\label{prop: small resolution implies cdv}
Let $X$ be a Gorenstein threefold with rational singularities. If $f: Y \longrightarrow X$ is a resolution of $X$ such that $\dim f^{-1}(P) \leq 1$ for all $P \in X$, then the singularities of $X$ are of $cDV$ type.
\end{Prop}
\begin{proof}
Since $X$ is Gorenstein with rational singularities, then $X$ has canonical singularities of index $1$. In \cite[Thm.\ 5.35]{Kollar-Mori} it is shown that if $X$ is a threefold with canonical singularities of index $1$ and $P \in X$ is a singular point, then the following are equivalent:
\begin{enumerate}
\item the general hypersurface section $P \in H \subseteq X$ is an elliptic singularity;
\item if $g: X' \longrightarrow X$ is any resolution of singularities then there is a crepant divisor $E \subseteq g^{-1}(P)$.
\end{enumerate}
Let $P \in X$ be a singular point. Since $\dim f^{-1}(P) \leq 1$, there is no divisor in $f^{-1}(P)$ and so the general hypersurface section through $P$ is not an elliptic singularity. Then the general hypersurface section through $P$ must have a rational (i.e.\ Du Val) singularity according to Proposition~\ref{prop: properties of rational gorenstein singularities}, which proves that $P$ is $cDV$.
\end{proof}

\subsubsection{Crepant resolutions}

In this Section we want to give some properties of crepant resolutions. In the following (in particular in Section~\ref{sect: crepant resolutions of weierstrass threefolds}) we will deal with such resolutions, so we collect here the basic results we will use.

\begin{Lemma}\label{lemma: positive discrepancy}
Let $X$ be a normal Gorenstein variety and $f: Y \longrightarrow X$ a resolution. If $D \subseteq \Exc f$ is an irreducible component such that $f(D) \cap (X \smallsetminus \Sing X) \neq \varnothing$, then $D$ is a discrepant divisor.
\end{Lemma}
\begin{proof}
For the generic $y \in D$, i.e.\ $y \in D \smallsetminus f^{-1}(\Sing X)$, we have that $f(y) \in X \smallsetminus \Sing X$. It follows from \cite[Prop.\ 5.8]{Kawamata} that then $D$ has codimension $1$ in $Y$. To prove that $D$ is discrepant, let $y \in D$ be generic as above and $x = f(y)$. Choose centred local local coordinates $(y_1, \ldots, y_n)$ and $(x_1, \ldots, x_n)$ around $y$ and $x$ respectively (where $n = \dim Y = \dim X$). So we have $(x_1, \ldots, x_n) = f(y_1, \ldots, y_n)$ and
\[f^*(dx_1 \wedge \ldots dx_n) = \det \left( J(f) \right) \cdot dy_1 \wedge \ldots dy_n\]
where $J(f)$ is the Jacobian matrix of $f$. Then the discrepancy of $D$ is the order of vanishing of $\det \left( J(f) \right)$ along $D$. Since $D \subseteq \Exc f$, $f$ is not an isomorphism around $x$, which implies that $\det \left( J(f) \right)$ vanishes and so that $D$ is discrepant.
\end{proof}

\begin{Cor}\label{cor: properties of crepant res}
Let $X$ be a normal Gorenstein variety and $f: Y \longrightarrow X$ a resolution. If $f$ is crepant, then:
\begin{enumerate}
\item $X$ has canonical singularities;
\item $\Exc f = f^{-1}(\Sing X)$.
\end{enumerate}
\end{Cor}
\begin{proof}
The first claim follows immediately from the definition of canonical singularity.\\
For the second, observe that we always have $f^{-1}(\Sing X) \subseteq \Exc f$. So we assume that $\Exc f \nsubseteq f^{-1}(\Sing X)$ and get a contradiction. Let $D$ be an irreducible component of $\Exc f$ which is not contained in $f^{-1}(\Sing X)$. By Lemma~\ref{lemma: positive discrepancy}, $D$ is a discrepant divisor and we are done.
\end{proof}

The previous Proposition shows that crepant resolutions are optimal in the sense that their exceptional locus is as small as possible. The following proposition shows that they are optimal also in the sense that they can not be factored through a divisorial contraction and another resolution of the singularities. A \emph{divisorial contraction} or the \emph{contraction of the divisor $D$} is a proper birational morphism $\varphi: X \longrightarrow X'$ such that $\Exc \varphi = D$ and $\varphi(D)$ has codimension at least $2$ in $X'$.

\begin{Prop}\label{prop: non contraction}
Let $X$ be a normal Gorenstein variety, and assume that we have a diagram
\[\xymatrix{Y \ar[rr]^f \ar[dr]_\varphi & & X\\
 & Y' \ar[ur]_{f'} & }\]
where $f$ is a resolution, $f'$ is a partial partial resolution and $\varphi$ is the contraction of a divisor $D$ in $Y$. Then
\begin{enumerate}
\item $D \subseteq \Exc f$;
\item if $f$ is crepant, then $\varphi$, $f'$ are crepant and $Y'$ has canonical singularities along $\varphi(D)$.
\end{enumerate}
\end{Prop}
\begin{proof}
Let $y \in D$ and define $y' = \varphi(y)$. Then $\dim \varphi^{-1}(y') > 1$ since $\varphi$ contracts $D$, and $f(\varphi^{-1}(y')) = f'(y')$ is a point. So $f$ is not an isomorphism near $f(y)$, i.e.\ $y \in \Exc f$, which proves that $D \subseteq \Exc f$.\\
Assume that $f$ is crepant, then we know that $X$ has canonical singularities by Corollary~\ref{cor: properties of crepant res}. So we can write
\[K_{Y'} = {f'}^* K_X + \sum a_i E_i, \qquad a_i \geq 0\]
where the $E_i$'s are the exceptional divisors introduced by $f'$. So
\[K_Y = \varphi^* K_{Y'} + a D = f^* K_W + \sum a_i \varphi^* E_i + a D.\]
We can write $\varphi^* E_i = \tilde{E}_i + b_i D$, where $\tilde{E}_i$ is the strict transform of $E_i$ in $Y$. But then we have
\[K_Y = f^* K_X + \sum a_i \tilde{E_i} + \left( a + \sum a_i b_i \right) D.\]
Since $f$ is crepant, we deduce that $a_i = 0$ for all $i$, hence that $a = 0$. This means that both $\varphi$ and $f'$ are crepant. By Lemma~\ref{lemma: positive discrepancy} we deduce that $Y'$ is not smooth and has canonical singularities along $\varphi(D)$.
\end{proof}

\subsubsection{Crepant resolutions of threefolds with $cDV$ singularities}\label{sect: resolving the du val locus}

In this Section we recall a result by M.\ Reid on the structure of crepant partial resolutions of threefolds with only $cDV$ singularities. It will be used in the proof of Theorem~\ref{thm: main theorem}.

\begin{Prop}[{cf.\ \cite[Thm.\ 1.14]{Pagoda}}]\label{prop: reid's main theorem}
Let $X$ be a threefold with $cDV$ sin\-gu\-la\-ri\-ties, and let $f: Y \longrightarrow X$ be a partial resolution. Then the following are equivalent:
\begin{enumerate}
\item $f$ is crepant;
\item $\dim f^{-1}(P) \leq 1$ for any $P \in X$, and $f$ if crepant above the generic point of any $1$-dimensional component of $\Sing X$;
\item for every $x$ in $X$ and every hypersurface $H$ through $x$ for which $x \in H$ is a Du Val singularity, $H' = f^{-1}(H)$ is normal and $f_{|_{H'}}: H' \longrightarrow H$ is crepant. Thus the minimal resolution of $x \in H$ factors through $H'$.
\end{enumerate}
\end{Prop}

\section{Elliptic fibrations}\label{section: elliptic fibration}

In this Section we want to recall the basic definitions and tools concerning the theory of elliptic fibrations. The theory of elliptic fibrations over a curve is well understood, see e.g.\ \cite{Kodaira1}, \cite{Kodaira}, \cite{Miranda}.

\begin{Def}\label{def: elliptic fibration}
We say that a morphism $\pi: X \longrightarrow B$ between normal varieties is an \emph{elliptic fibration} over $B$ if
\begin{enumerate}
\item $\pi$ is a proper morphism with connected fibres;
\item the generic fibre of $\pi$ is a smooth connected curve of genus $1$;
\item a section $\sigma: B \longrightarrow X$ of $\pi$ is given, i.e.\ $\sigma$ is a morphism such that $\pi \circ \sigma = \id_B$.
\end{enumerate}
\end{Def}

We denote $S = \sigma(B)$ the image of the section, and still call it a section.

\begin{Def}
Let $\pi: X \longrightarrow B$ be an elliptic fibration. If $\dim X_b = 1$ for all $b \in B$ (i.e.\ if every fibre is a curve), we say that $\pi$ is an \emph{equidimensional} elliptic fibration.
\end{Def}

A large class of examples of equidimensional elliptic fibrations is provided by Weierstrass fibrations.

\begin{Def}[{cf.\ \cite[Def.\ 1.1]{Nakayama}}]
A \emph{Weierstrass fibration} $p: W \longrightarrow B$ is a fibration described in a $\PP^2$-bundle over $B$ of the form $\PP(\cL^{\otimes 2} \oplus \cL^{\otimes 3} \oplus \cO_B)_{(x: y: x)}$ (for some line bundle $\cL$ on $B$) by a Weierstrass equation
\begin{equation}\label{formula: weierstrass equation}
y^2 z = x^3 + a_4 x z^2 + a_6 z^3, \qquad a_i \in H^0(B, \cL^{\otimes i}),
\end{equation}
such that $\Delta(p) = 4 a_4^3 + 27 a_6^2$ does not vanish identically. We will refer to a Weierstrass fibration, or to its total space, also as $W(\cL; a_4, a_6)$.
\end{Def}

Given two elliptic fibrations $\pi: X \longrightarrow B$ and $\pi': X' \longrightarrow B$, a \emph{morphism} (resp.\ a \emph{rational map}) of elliptic fibrations is a morphism $f: X \longrightarrow X'$ (resp.\ a rational map $f: X \dashrightarrow X'$) which is compatible with the fibrations, i.e.\ such that $\pi = \pi' \circ f$ (resp.\ $\pi = \pi' \circ f$ over the domain of $f$).

Exploiting the presence of the section there is a canonical way to associate to an elliptic fibration $\pi: X \longrightarrow B$ a birationally equivalent Weierstrass fibration: this is called the \emph{Weierstrass model} of the fibration.

\begin{Prop}[{cf.\ \cite[Thm.\ 2.1 and its proof]{Nakayama}}]\label{prop: existence weierstrass}
Let $\pi: X \longrightarrow B$ be a smooth elliptic fibration with section $s$ and let $S = s(B)$. Let $\cF = \left( \pi_* i_* \mathcal{N}_{S|X} \right)^{-1}$, where $i: S \hookrightarrow X$ is the inclusion. Then the canonical morphism $\pi^* \pi_* \cO_X(3 S) \longrightarrow \cO_X(3 S)$ is surjective, and defines a proper birational morphism $f: X \longrightarrow W$, where $W = W(\cF; a_4, a_6)$ is a (possibly singular) Weierstrass fibration.
\end{Prop}

The line bundle $\cF$ defined in Proposition~\ref{prop: existence weierstrass} is called the \emph{fundamental line bundle} of the fibration. It is possible to show (see \cite[$\S$II.4]{Miranda}) that
\[\pi_* \cO_X(3S) = \cF^{-2} \oplus \cF^{-3} \oplus \cO_B.\]
and that $\cF$ is intrinsic of the fibration and do not depend on the choice of the section, since
\[\cF \simeq \left( R^1 \pi_* \cO_X \right)^{-1}.\]

\begin{Rmk}
We can describe the morphism $f: X \longrightarrow W$ as the contraction, in the reducible fibres, of all the irreducible components which do not meet the section (cf.\ \cite[Rmk.\ 2.8]{Nakayama}).
\end{Rmk}

\begin{Rmk}
Let $\pi: X \longrightarrow B$ be a smooth elliptic fibration. Then the morphism $f: X \longrightarrow W$ to its Weierstrass model is a resolution of $W$.
\end{Rmk}

\begin{Rmk}
As any hypersurface in a smooth ambient space, the Weierstrass model $W$ of an elliptic fibration is a Gorenstein variety (cf.\ Example~\ref{ex: gorenstein varieties}). We can find this fact also in \cite[p.\ 409]{Nakayama}, where there is also a formula for the dualizing sheaf of $p: W \longrightarrow B$:
\[\omega_W = p^* \left( \omega_B \otimes \cF \right).\]
\end{Rmk}

\begin{Def}
The \emph{discriminant locus} of an elliptic fibration $\pi: X \longrightarrow B$ is the subset of $B$ which parametrizes the singular fibres of $\pi$:
\[\Delta(\pi) = \left\{ b \in B \st X_b \text{ is a singular curve} \right\}.\]
\end{Def}

\begin{Rmk}
Let $p: W \longrightarrow B$ be a Weierstrass fibration. Then we can describe its discriminant by an equation, which is the usual discriminant of the Weierstrass cubic polynomial (\ref{formula: weierstrass equation}):
\[\Delta(p): 4 a_4^3 + 27 a_6^2 = 0.\]
So $\Delta(p)$ is not only a subset of $B$, but also a subscheme.
\end{Rmk}

\begin{Rmk}\label{rmk: inclusion discriminants}
Let $\pi: X \longrightarrow B$ be an elliptic fibration, with Weierstrass model $p: W \longrightarrow B$. If $X_b$ is a smooth elliptic curve then $W_b$ is smooth, in fact in this case $W_b$ is the Weierstrass model of $X_b$. This implies that $B \smallsetminus \Delta(\pi) \subseteq B \smallsetminus \Delta(p)$, or equivalently that $\Delta(p) \subseteq \Delta(\pi)$.
\end{Rmk}

\begin{Ex}\label{ex: different discriminants}
Let $E = \{ v^2 w = u^3 + \alpha u w^2 + \beta w^3 \} \subseteq \PP^2_{(u: v: w)}$ be an elliptic curve in Weierstrass form, with zero $O = (0: 1: 0)$, and let $B$ be a smooth surface. Define $W = B \times E \subseteq B \times \PP^2$, the constant fibration with structure map $p: W \longrightarrow B$ and section $S = B \times \{ O \}$. Choose a smooth curve $C \subseteq B$ and a point $Q \in E \smallsetminus \{ O \}$, and let $f: X \longrightarrow W$ be the blow up of $W$ in $C \times \{ Q \}$. Then $\pi = p \circ f$ defines on $X$ an equidimensional elliptic fibration over $B$, whose section is the strict transform of $S$. The fibre over $P \in C$ is singular, as it is reducible: its irreducible components are the strict transform of the curve $W_P$ and the rational curve introduced by the blow up. The Weierstrass model of $X$ is $W$, and in this example we have $\Delta(p) \subsetneqq \Delta(\pi)$: in fact $\Delta(p)$ is empty while $\Delta(\pi)$ is the curve $C$.
\end{Ex}

As pointed out in Example~\ref{ex: different discriminants}, we can have elliptic fibrations whose discriminant is different from the discriminant of their Weierstrass model. However we can avoid this effect if our original fibration is sufficiently close to its Weierstrass model.

\begin{Prop}\label{prop: same discriminant}
Let $\pi: X \longrightarrow B$ be a smooth elliptic fibration and $f: X \longrightarrow W$ be the morphism to its Weierstrass model $p: W \longrightarrow B$. If $\Exc f = f^{-1}(\Sing W)$, then $\Delta(\pi) = \Delta(p)$.
\end{Prop}
\begin{proof}
In view of Remark~\ref{rmk: inclusion discriminants} we only need to show that $\Delta(\pi) \subseteq \Delta(p)$, or equivalently that for any $b \in B$ such that $W_b$ is smooth, then $X_b$ is smooth. But if $W_b$ is a smooth fibre, then $W_b \subseteq W \smallsetminus \Sing W \simeq X \smallsetminus \Exc f$ and so $f$ induces an isomorphism $X_b \simeq W_b$.
\end{proof}

\section{Crepant resolutions of Weierstrass threefolds}\label{sect: crepant resolutions of weierstrass threefolds}

In this Section we investigate the structure of the non-Kodaira fibres in a smooth equidimensional elliptic threefold which is a crepant resolution of its Weierstrass model. Before going deep into the subject we want to give some reasons why this class of fibrations is interesting to study, so we take a little detour to the theory of elliptic surfaces.

Let $\pi: X \longrightarrow B$ be a smooth elliptic surface. The following well known definition is due to Kodaira.

\begin{Def}[{cf.\ \cite[p.\ 564]{Kodaira}}]
Let $\pi: X \longrightarrow B$ be a smooth elliptic surface. We say that it is \emph{minimal elliptic} if no fibre of $\pi$ contains a $(-1)$-curve of $X$.
\end{Def}

Being minimal for an elliptic surface is crucial to obtain a classification of the possible singular fibres that can occur: without that assumption, we can blow up points on the surface and obtain in this way many and many different singular fibres. It is then important to restrict ourselves to a suitable class of elliptic fibrations if we have in mind classificatory purposes. In the case of surfaces, \cite[Thm. 6.2]{Kodaira} gives a complete classification of all the possible singular fibres of a smooth minimal elliptic surface, which we call \emph{singular fibres of Kodaira type} or simply \emph{Kodaira fibres}.

The following Proposition is important since it provides a clear link between minimal elliptic surfaces and their Weierstrass models.

\begin{Prop}[{cf.\ \cite[Prop.\ III.3.2]{Miranda}}]
Let $p: W = W(\cL; a_4, a_6) \longrightarrow B$ be a Weierstrass fibration over the curve $B$. Then the following are equivalent:
\begin{enumerate}
\item $W$ is the Weierstrass model of a smooth minimal elliptic surface;
\item $W$ has only Du Val singularities;
\item there is no point $b \in B$ such that
\[\left\{ \begin{array}{l}
\mult_b a_4 \geq 4\\
\mult_b a_6 \geq 6.
\end{array} \right.\]
\end{enumerate}
\end{Prop}

In the hypothesis of the above Proposition, if we assume that $W$ is the Weierstrass model of some smooth elliptic surface $\pi: X \longrightarrow B$, then that surface is in fact \emph{the unique} smooth minimal elliptic surface having $W$ as Weierstrass model. By the Proposition, such $W$ has Du Val singularities, and $\pi$ is defined by the minimal resolution $f: X \longrightarrow W$ of $W$, which is crepant (see Example~\ref{ex: surface rational gorenstein singularities}). Moreover, any smooth minimal elliptic surface can be described in this way.

\begin{Rmk}
It follows from this description that we can define equivalently a smooth minimal elliptic surface as an elliptic surface which is the crepant resolution of a Weierstrass fibration having only Du Val singularities.
\end{Rmk}

We adopt this as a starting point for our analysis of threefolds.

Let $\pi: X \longrightarrow B$ be a smooth elliptic threefold. Then the morphism to the Weierstrass model $f: X \longrightarrow W$ is a resolution of $W$. We want to use such morphism as a ``measure'' of the complexity of the original fibration: for the simplest one, which should be closer to their Weierstrass model, we can make some attempt to classify their non-Kodaira fibres.\\
Here is another reason why we are interested in crepant resolutions and in their properties. One way to produce smooth elliptic threefolds is to start with a singular Weierstrass fibration and then desingularize it. In this process one generally tries to keep the desingularization as similar as possible as the Weierstrass fibration he started with: it is usual to look for crepant resolutions. This is important especially in $F$-theory, where one starts with a singular Calabi--Yau Weierstrass fibration, and look for a resolution whose total space is still a Calabi--Yau variety.

As we saw in Example~\ref{ex: different discriminants}, an elliptic fibration can be very different from its Weierstrass model simply because the first can have a discriminant locus which is more complicated than the discriminant locus of the Weierstrass model. The following Lemma (which holds in any dimension, not only in the case of threefolds) shows that an elliptic fibration which is a crepant resolution of its Weierstrass model is not too far from it in the sense that they have the same discriminant.

\begin{Lemma}\label{lemma: crepant implies same discriminant}
Let $\pi: X \longrightarrow B$ be a smooth elliptic fibration, with Weierstrass model $p: W \longrightarrow B$. If the morphism to the Weierstrass model $f: X \longrightarrow W$ is crepant, then $\Delta(\pi) = \Delta(p)$.
\end{Lemma}
\begin{proof}
The result easily follows from Corollary~\ref{cor: properties of crepant res} and Proposition~\ref{prop: same discriminant}.
\end{proof}

Assume that $W = W(\cL; a_4, a_6)$ is a Weierstrass fibration with $W$ a threefold. If we have some control on the coefficients $a_4$ and $a_6$, we can ensure that $W$ has rational Gorenstein singuarities.

\begin{Lemma}[{cf.\ \cite[Lemma 3.6]{Nakayama}}]\label{lemma: mult implies rat gor}
Let $p: W = W(\cL; a_4, a_6) \longrightarrow B$ be a Weierstrass fibration over a smooth surface $B$. If $\mult_b a_4 \leq 3$ or $\mult_b a_6 \leq 5$ for any $b \in B$, then $W$ has rational Gorenstein singuarities.
\end{Lemma}

\begin{Lemma}\label{lemma: cdv implies mult}
Let $p: W = W(\cL; a_4, a_6) \longrightarrow B$ be a Weierstrass fibration over a smooth surface $B$. If $W$ has only $cDV$ singularities, then $\mult_b a_4 \leq 3$ or $\mult_b a_6 \leq 5$ for any $b \in B$.
\end{Lemma}
\begin{proof}
First of all, observe that $W$ has rational Gorenstein singularities. Assume that there is a point $b \in B$ such that
\[\left\{ \begin{array}{l}
\mult_b a_4 \geq 4\\
\mult_b a_6 \geq 6.
\end{array} \right.\]
So we can find local coordinates $(s, t)$ around $b$ such that a (local) equation for $W$ is given by
\[y^2 = x^3 + a_4(s, t) x + a_6(s, t)\]
where $a_4$ and $a_6$ are sums of monomials of degree at least $4$ and $6$ respectively. By Proposition~\ref{prop: classification of rational gorenstein threefold singularities}, this is the canonical form of a rational Gorenstein threefold singularity with invariant $k = 1$. Since these singularities are not $cDV$, we get a contradiction.
\end{proof}

We begin to study the link between Weierstrass fibrations and equidimensional elliptic threefolds. If $\pi: X \longrightarrow B$ is a smooth equidimensional elliptic fibration, then we have a full control on the singularities of its Weierstrass model $W$.

\begin{Lemma}\label{lemma: better weierstrass}
Let $p: W = W(\cL; a_4, a_6) \longrightarrow B$ be a Weierstrass fibration over a smooth surface $B$. Assume that $W$ has a resolution $f: X \longrightarrow W$ such that $\dim f^{-1}(w) \leq 1$ for every $w \in W$. Then the following conditions are equivalent:
\begin{enumerate}
\item $W$ has $cDV$ singularities.
\item $\mult_b a_4 \leq 3$ or $\mult_b a_6 \leq 5$ for any $b \in B$.
\item $W$ has rational Gorenstein singularities.
\end{enumerate}
\end{Lemma}
\begin{proof}
$(1) \Longrightarrow (2)$ is Lemma~\ref{lemma: cdv implies mult}.\\
$(2) \Longrightarrow (3)$ is Lemma~\ref{lemma: mult implies rat gor}.\\
$(3) \Longrightarrow (1)$ follows from Proposition~\ref{prop: small resolution implies cdv}.
\end{proof}

\begin{Rmk}\label{rmk: canonical iff cdv}
In particular, we can apply Lemma~\ref{lemma: better weierstrass} when $W$ is the Weierstrass model of a smooth equidimensional elliptic threefold, and in this case the class of canonical singularities coincides with the one of $cDV$ singularities.
\end{Rmk}

We are now ready to prove our main Theorem.

\begin{Thm}\label{thm: main theorem}
Let $\pi: X \longrightarrow B$ be a smooth equidimensional elliptic threefold, with morphism to the Weierstrass model $f: X \longrightarrow W$. If $f$ is a crepant resolution of $W$, then
\begin{enumerate}
\item $W$ has $cDV$ singularities;
\item\label{item: no contraction} $f$ can not be factored as
\[\xymatrix{X \ar[dr]_\varphi \ar[rr]^f & & W\\
 & X' \ar[ur]_{f'} & }\]
where $\varphi$ is the contraction of some divisor $D \subseteq X$ and $X'$ is smooth;
\item if $X_b$ is a non-Kodaira fibre, then $b \in \Sing (\Delta(\pi)_\red)$;
\item if $X_b$ is a non-Kodaira fibre, then $X_b$ is a contraction of a Kodaira fibre, whose type can be determined by Tate's algorithm.
\end{enumerate}
\end{Thm}
\begin{proof}
Since $f$ is crepant, $W$ has canonical singularities by Corollary~\ref{cor: properties of crepant res}. These singularities are $cDV$ by Remark~\ref{rmk: canonical iff cdv}.\\
The second assertion follows directly from Proposition~\ref{prop: non contraction}.\\
We will now prove the last two statements. Observe that by Lemma~\ref{lemma: crepant implies same discriminant}, $\pi$ and $p$ have the same discriminant, so we consider $\Delta(\pi) = \Delta(p)$ also as subschemes of $B$. Let $Q \in \Delta(\pi)$ be generic, meaning that $Q$ is a smooth point for the reduced discriminant $(\Delta(\pi))_\red$, and let $C$ be a (local) curve through $Q$ which is smooth at $Q$. For the generic $C$ (i.e.\ meeting $(\Delta(\pi))_\red$ transversally) we have that
\begin{enumerate}
\item $X_{|_C} = \pi^{-1}(C)$ is a smooth elliptic surface;
\item $W_{|_C} = p^{-1}(C)$ is a Weierstrass surface with Du Val singularities;
\item $f_{|_C}: X_{|_C} \longrightarrow W_{|_C}$ is the morphism to the Weierstrass model.
\end{enumerate}
By Proposition~\ref{prop: reid's main theorem}, $f_{|_C}$ factors the minimal resolution of $W_{|_C}$, and since $X_{|_C}$ is smooth we deduce that $f_{|_C}$ coincides with the minimal resolution of $W_{|_C}$. This means that $X_{|_C}$ is a smooth minimal elliptic surface. Hence, over the smooth points of $(\Delta(\pi))_\red$ the corresponding fibre is a Kodaira fibre. So the non-Kodaira fibres are located over the singular points of $(\Delta(\pi))_\red$.\\
Let $b \in \Sing ((\Delta(\pi))_\red)$ be a point such that $X_b$ is of non-Kodaira type, and consider a curve $C$ through $b$ such that $\mult_b {a_4}_{|_C} \leq 3$ or $\mult_b {a_6}_{|_C} \leq 5$ (in fact the generic $C$ will work). By Proposition~\ref{prop: reid's main theorem}, $X_{|_C} \longrightarrow W_{|_C}$ factors the minimal resolution of $W_{|_C}$ and so $X_b$ is the contraction of a Kodaira fibre, whose type can be predicted by Tate's algorithm.
\end{proof}

The second statement in Theorem~\ref{thm: main theorem} is interesting, since it tells us that the smooth equidimensional elliptic fibrations which are crepant resolutions of their Weierstrass models are close to the Weierstrass model also in another weaker sense.\\
Assume that $f: X \longrightarrow X'$ is a birational morphism of smooth projective varieties. Then its exceptional locus is of pure codimension $1$ (cf.\ \cite[Prop.\ 5.8]{Kawamata} or \cite[$\S$1.40]{Debarre}). So, if we have a smooth elliptic fibration $\pi: X \longrightarrow B$ then part~\eqref{item: no contraction} of Theorem~\ref{thm: main theorem} says that we can not simplify our $X$ by means of some divisorial contraction to get a new smooth elliptic fibration $\pi': X' \longrightarrow B$ (where $\pi' = p \circ f'$) with the same Weierstrass model $W$ as $X$.

\section{Examples and non-examples}\label{sect: examples}

In this Section we give some examples of elliptic threefolds with non-Kodaira fibres, and show that they are contracions of Kodaira fibres, as stated in Theorem~\ref{thm: main theorem}.\\
In Example~\ref{ex: CCvG} and Example~\ref{ex: non-moving singularities} we will construct explicitly some of these fibres, which are the contraction of fibres of Kodaira type $I_0^*$ and $I_1^*$ respectively, and in both cases it is possible to check that the morphism to the Weierstrass model is crepant. In Example~\ref{ex: a non-example} we will show that some configurations of curves can not appear as fibre of a smooth equidimensional elliptic threefold for which the morphism to the Weierstrass model is crepant. Example~\ref{ex: resolution of non-cdv} is devoted to the construction of a crepant resolution of a Weierstrass fibration which does not satisfy the condition
\begin{equation}\label{eq: no points with high multiplicity}
\mult_b a_4 \leq 3 \text{ or } \mult_b a_6 \leq 5 \text{ for all } b \in B.
\end{equation}
As we will see, the resolution will not be an equidimensional elliptic fibration.

\begin{Ex}
One of the first papers where non-Kodaira fibres appeared is \cite{SME3-f}, that article is in fact devoted to an explicit desingularization of certain Weierstrass threefolds. The method described there allows one to blow up the base, in order to deal with a simpler fibration (e.g.\ one can blow up the base surface until the discriminant curve has only normal crossing). With this freedom, the classification of the non-Kodaira fibres in the paper is compete, and in \cite[$\S$14]{SME3-f} there is the observation that all the non-Kodaira fibres found in this way are contracion of the right Kodaira fibre, in agreement with Theorem~\ref{thm: main theorem}.
\end{Ex}

\begin{Ex}[{cf.\ \cite[$\S$2]{CCvG}}]\label{ex: CCvG}
Let $Z$ be the projective bundle $\PP(\cO_{\PP^2}(3) \oplus \cO_{\PP^2} \oplus \cO_{\PP^2})$ over $\PP^2$ and consider the hypersurface
\begin{equation}\label{equation: CCvG}
X: x^2 y + f y^3 + g (y^2 z + z^3) = 0,
\end{equation}
where $(x: y: z)$ are coordinates in the fibres and $f$, $g$ are homogeneous sextic polynomials defining smooth plane sextics intersecting transversally in $36$ distinct points. Then $X$ is a smooth variety, and restricting to $X$ the bundle projection we get an elliptic fibration over $\PP^2$, with section $\sigma$ given by $P \longmapsto (1: 0: 0) \in X_P$. The discriminant of the family is the curve
\[\Delta: g^4 (27 f^2 + 4 g^2) = g^4 (2 g + 3 \sqrt{-3} f) (2 g - 3 \sqrt{-3} f) = 0;\]
over the curve $g = 0$ the fibre is of Kodaira type $IV$, while over each of the curves $2 g \pm 3 \sqrt{-3} f = 0$ we have nodal cubics. Over the $36$ points where $f = g = 0$, the fibre has equation $x^2 y = 0$ and so consists of two concurring lines, one of which with multiplicity $2$ and so this fibre is not of Kodaira type. A picture of the singular fibres of this fibration is in Figure~\ref{figure: fibres of CCvG}.

\begin{center}
\begin{figure}[H]
\includegraphics[width = 0.8\textwidth]{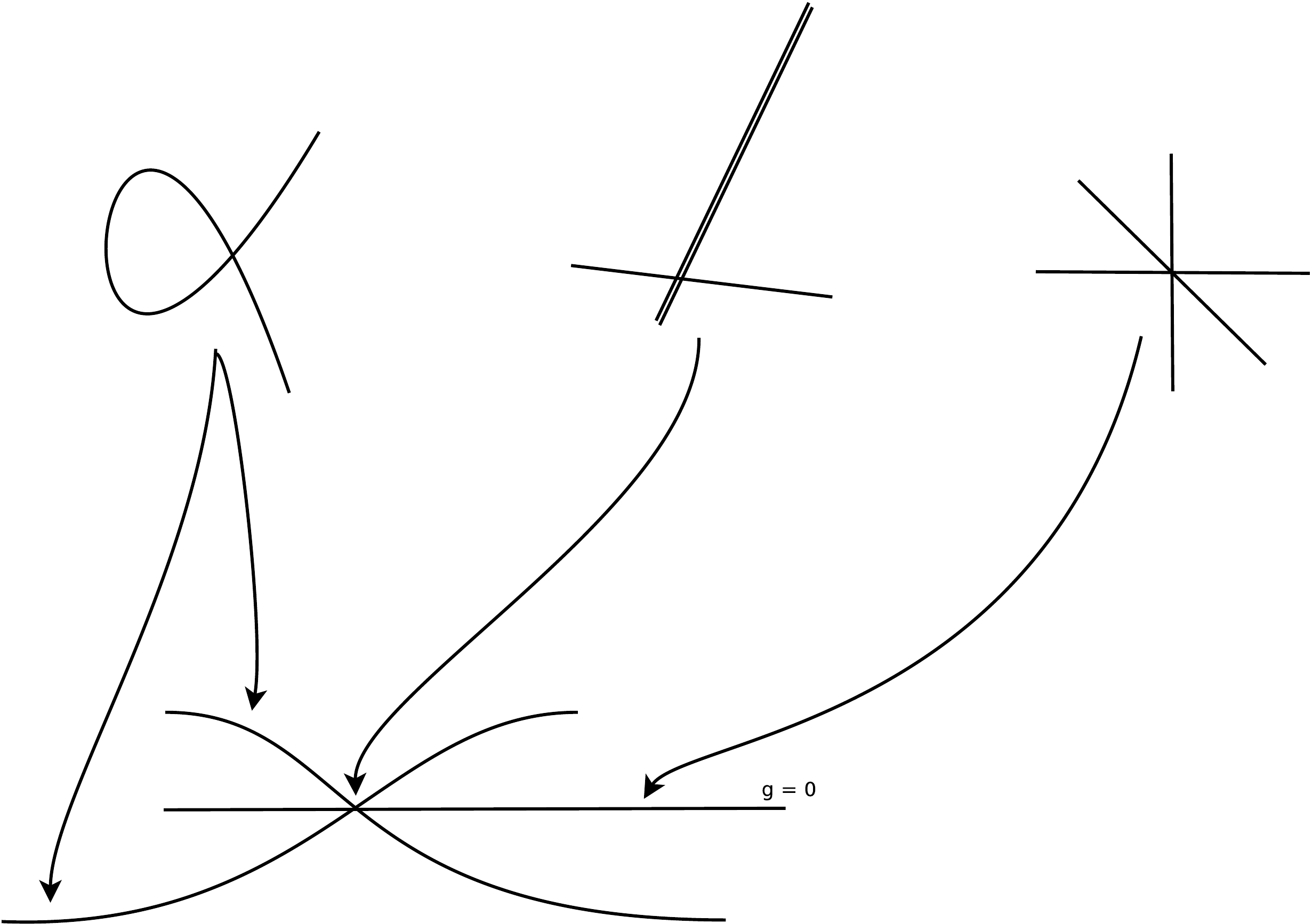}
\caption{The singular fibres of the fibration (\ref{equation: CCvG}).}
\label{figure: fibres of CCvG}
\end{figure}
\end{center}

Around each point where $f = g = 0$ there is a suitable neighbourhood where $f$ and $g$ give local centred coordinates. In such a neighbourhood, we choose a generic line through the origin, say $g = \lambda f$, and define $X_\lambda$ to be the restriction of the fibration to this line. By Tate's algorithm, the fibre we expect over $f = 0$ is of Kodaira type $I_0^*$: the fibre of the threefold is a contraction of such fibre, in agreement with Theorem~\ref{thm: main theorem}. In Figure~\ref{figure: contraction of I_0^*} it's possible to see which components have been contracted.

\begin{center}
\begin{figure}[H]
\includegraphics[width = 0.8\textwidth]{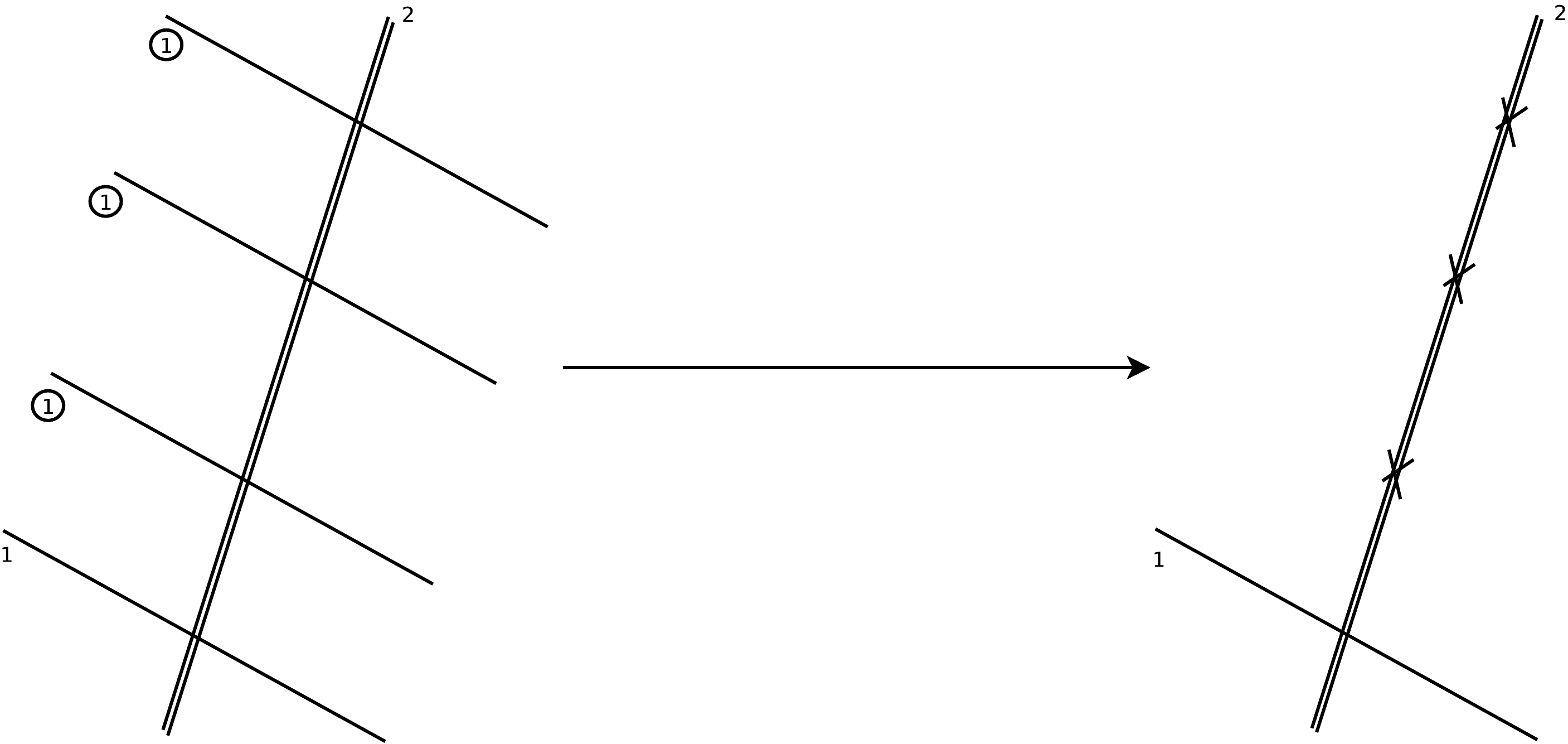}
\caption{On the left a $I_0^*$ fibre, and on the right the non-Kodaira fibre of fibration (\ref{equation: CCvG}). The circled components are blown down: they are the multiplicity $1$ components which do not meet the section.}
\label{figure: contraction of I_0^*}
\end{figure}
\end{center}

\begin{Rmk}\label{rmk: moving singularities}
Observe that for different values of the parameter $\lambda$, the three singular points in the central fibre of $X_\lambda$ change coordinates. As a consequence these points, which are singular only for some values of $\lambda$, are smooth points for the threefold.
\end{Rmk}
\end{Ex}

In Example~\ref{ex: CCvG}, a point of the multiplicity $2$ component of the non-Kodaira fibres is a smooth point for the elliptic surface obtained by restriction to the \emph{generic} curve through the points where $f = g = 0$. As observed in Remark~\ref{rmk: moving singularities}, this is enough to ensure that the threefold is smooth at those points. The next example will show that this condition is not necessary.

\begin{Ex}\label{ex: non-moving singularities}
Consider in $\PP(\cO_{\PP^2}(4) \oplus \cO_{\PP^2}(6) \oplus \cO_{\PP^2})$ the fibration in Weierstrass form
\begin{equation}\label{equation: weierstrass of fibration with I_1^*}
W: y^2 z = x^3 - \frac{1}{3} t^2 x z^2 + \left( s^4 + \frac{2}{27} t^3 \right) z^3,
\end{equation}
where $s = 0$ and $t = 0$ define a smooth cubic and a smooth quartic respectively, with transverse intersection in $12$ distinct points. The discriminant of this fibration is
\[\Delta = s^4 (27 s^4 + 4 t^3)\]
and so by Tate's algorithm we expect that a resolution $\pi: X \longrightarrow \PP^2$ of $W$ has
\begin{enumerate}
\item $I_4$ fibres over the line $s = 0$. 
\item $I_1$ fibres over $27 s^4 + 4 t^3 = 0$.
\end{enumerate}
The curve $s = x = y = 0$ is singular for the threefold $W$, hence we blow it up. The effect is that over $s = 0$ instead of nodal cubics now we have triangles, with one of the vertices which is still singular for the whole variety. After a second blow up of this curve of singular points we have a smooth threefold with $I_4$ fibres over $s = 0$, as expected. Observe that the exceptional divisors we introduce in this way are crepant. The fibre over the points where $s = 0$ and $t = 0$ meet are of non-Kodaira type, and we have a picture of this fibres in Figure~\ref{figure: fibres of fibration with I_1^*}.

\begin{center}
\begin{figure}[H]
\includegraphics[width = 0.8\textwidth]{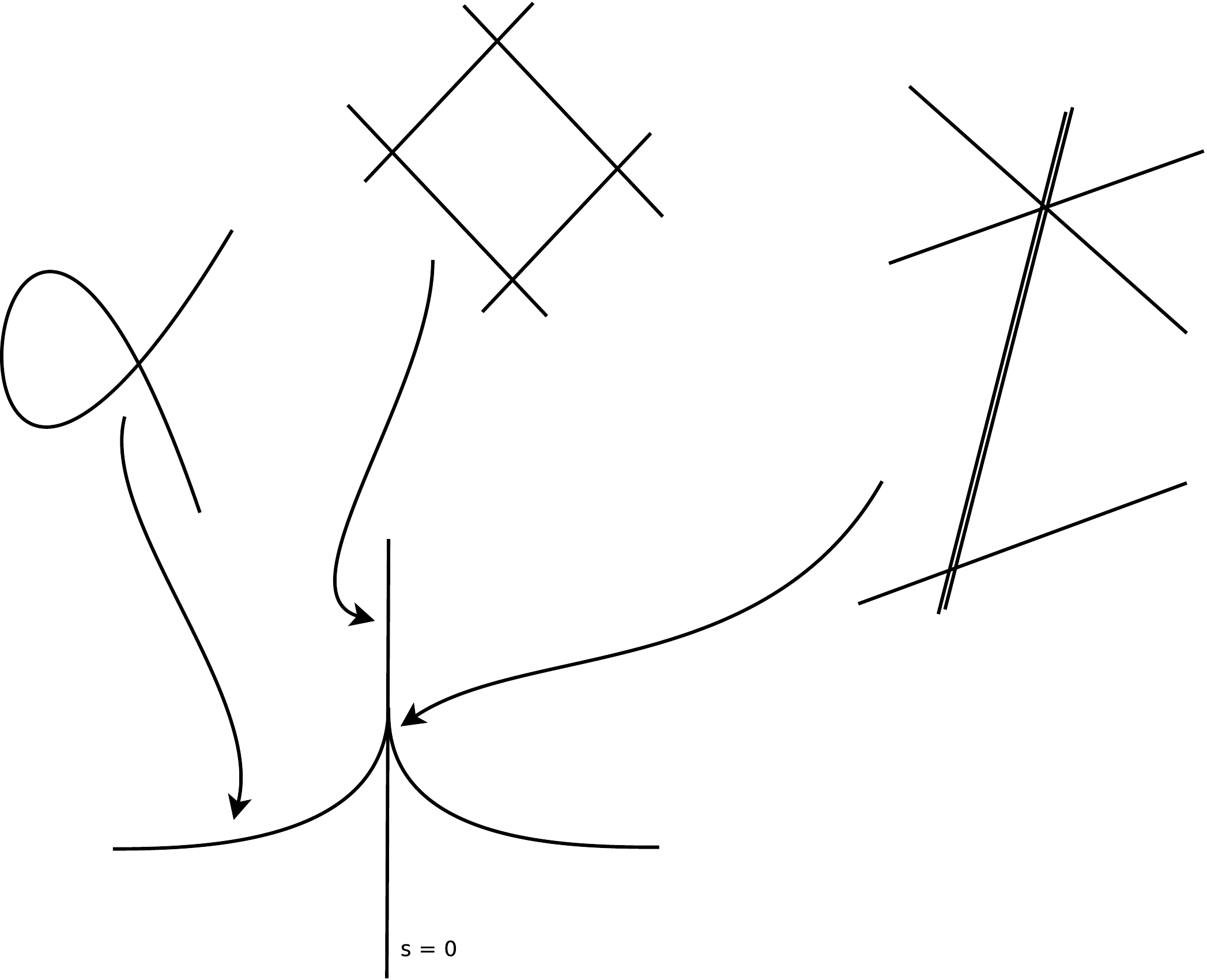}
\caption{The singular fibres of the resolution of (\ref{equation: weierstrass of fibration with I_1^*}).}
\label{figure: fibres of fibration with I_1^*}
\end{figure}
\end{center}

We want now to give a local description of the fibration around a point over which we have the new fibres, and in a suitable neighbourhood of such points local coordinates are given by $s$ and $t$. Let $X_\lambda$ be the restriction of the fibration to the generic line through the origin, i.e.\ $t = \lambda s$. By Tate's algorithm on this elliptic surface $X_\lambda$, we should have over the origin a fibre of Kodaira type $I_1^*$: what we see is not the whole fibre but a contraction of it (Figure~\ref{figure: contracion of I_1^*}).

\begin{center}
\begin{figure}[H]
\includegraphics[width = 0.8\textwidth]{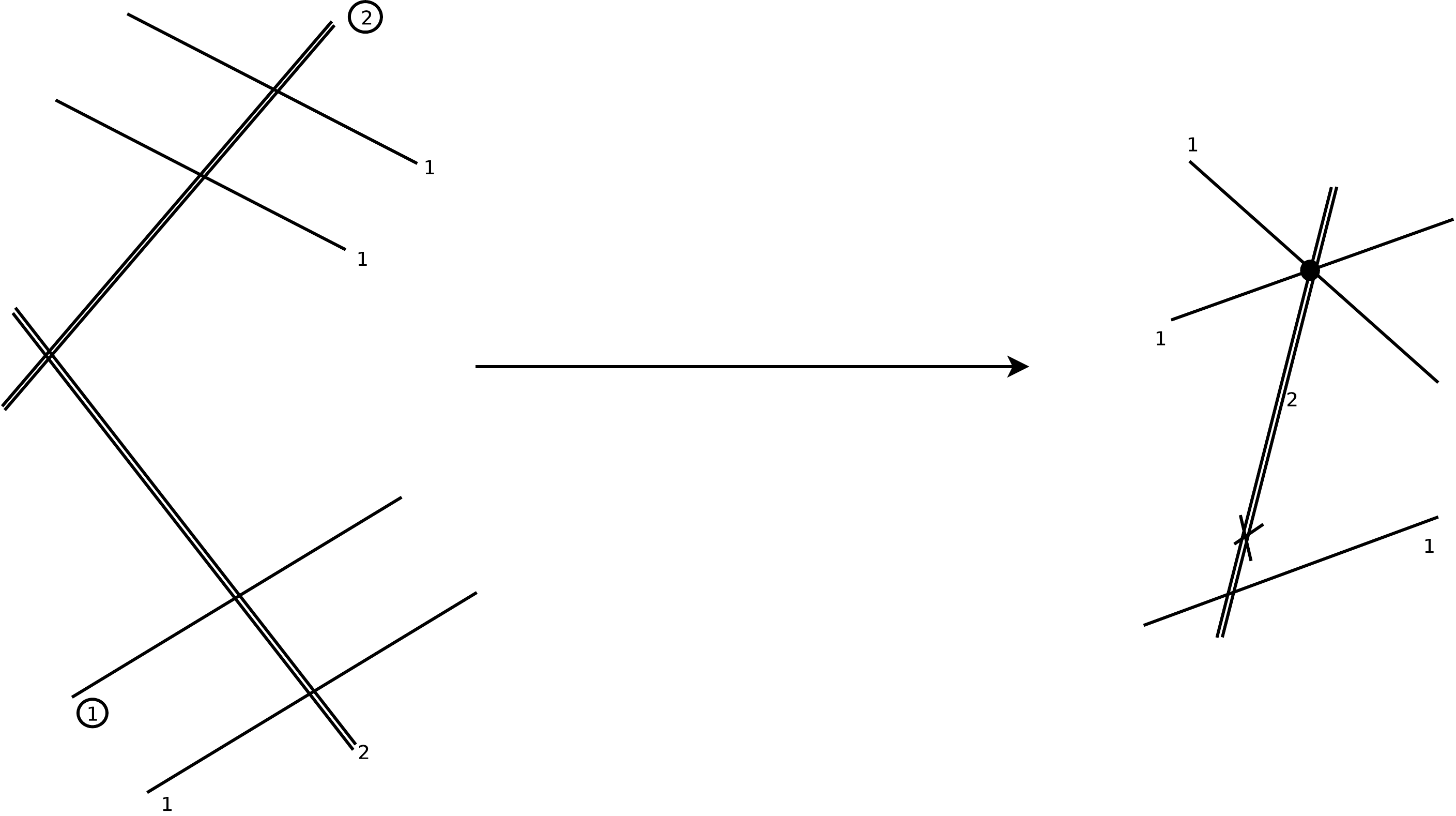}
\caption{On the left a $I_1^*$ fibre, and on the right the non-Kodaira fibre on the resolution of (\ref{equation: weierstrass of fibration with I_1^*}). The circled components are contracted.}
\label{figure: contracion of I_1^*}
\end{figure}
\end{center}

This example is interesting also for the following reason. The surface $X_\lambda$ is singular over $s = 0$, since we do not have a Kodaira fibre, and so there are singular points whose coordinates are expected to depend on $\lambda$. In fact there are always two singular points: one of them is on the multiplicity two component of the non-Kodaira fibre and its coordinates depend on $\lambda$, while the second is the point of the fibre where the multiplicity $2$ component of the $I_1^*$ fibre was blown down. This last has coordinates independent of $\lambda$, but nevertheless the threefold is smooth at this point.
\end{Ex}

\begin{Ex}
The fibration described in Example~\ref{ex: non-moving singularities} fits in a more general class of examples, which also show that~\eqref{eq: no points with high multiplicity} gives a necessary condition for a Weierstrass fibration to be the Weierstrass model of some smooth equidimensional elliptic threefold (according to Lemma~\ref{lemma: better weierstrass}), but that this condition is far from being sufficient.\\
Consider the local elliptic fibration
\[y^2 z = x^3 + t^m x^2 z + s^n z^3, \qquad m, n \geq 1,\]
in $U \times \PP^2$, where $U$ is a neighbourhood of the origin in $\A^2_{(s, t)}$, while $(x: y: z)$ are coordinates in the fibre $\PP^2$.\\
We can put the equation in Weierstrass form, obtaining the standard equation $y^2 z = x^3 + a_4 x z^2 + a_6 z^3$ with
\[\begin{array}{l}
a_4 = -\frac{1}{3} t^{2m},\\
a_6 = s^n + \frac{2}{27} t^{3m};
\end{array}\]
and so the discriminant locus of the family is
\[\Delta: s^n (27 s^n + 4 t^{3m}) = 0.\]
By Lemma~\ref{lemma: better weierstrass}, not for every pair $(m, n)$ the corresponding variety can be the Weierstrass model of an equidimensional elliptic fibration. We have to exclude all the cases with
\[\left\{ \begin{array}{l}
2m \geq 4\\
\min (n, 3m) \geq 6
\end{array} \right. \longrightarrow \left\{ \begin{array}{l}
m \geq 2\\
n \geq 6.
\end{array} \right.\]

\begin{Rmk}
The fibration described in Example~\ref{ex: non-moving singularities} corresponds to $(m, n) = (1, 4)$.
\end{Rmk}

\begin{Rmk}
All the cases with $(m, n) = (m, 1)$ are smooth, and the origin is a point of the discriminant where the two components meet with arbitrarily large multiplicity.
\end{Rmk}

\begin{Rmk}
Consider the cases with $(m, n) = (m, 3)$ and $m \geq 2$. After one blow up in the singular locus, we obtain a variety which is still singular in three points, where each singularity is locally isomorphic to
\[y^2 = x^2 + s^2 + t^m.\]
As pointed out in \cite[Cor.\ 1.16]{Pagoda}, these singularities are $cDV$ and admit a small resolution if and only if $m$ is even. This means that not every Weierstrass fibration satisfying the condition~\eqref{eq: no points with high multiplicity} is actually the Weierstrass model of a smooth equidimensional elliptic fibration.
\end{Rmk}
\end{Ex}

\begin{Ex}\label{ex: a non-example}
We now want to show that not every singular curve of arithmetic genus $1$ can be a fibre in a smooth equidimensional elliptic threefold.\\
Let $F$ be the curve defined in $\PP^3$ by the equations
\[\left\{ \begin{array}{l}
x_0 x_3 = 0\\
x_1 x_3 = 0\\
x_1^2 x_2 = x_0^3.
\end{array} \right.\]
This curve has two irreducible components: a plane cuspidal cubic and a non-coplanar line passing through the cusp (Figure~\ref{figure: cusp with a line}).

\begin{center}
\begin{figure}[H]
\includegraphics[width = 0.4\textwidth]{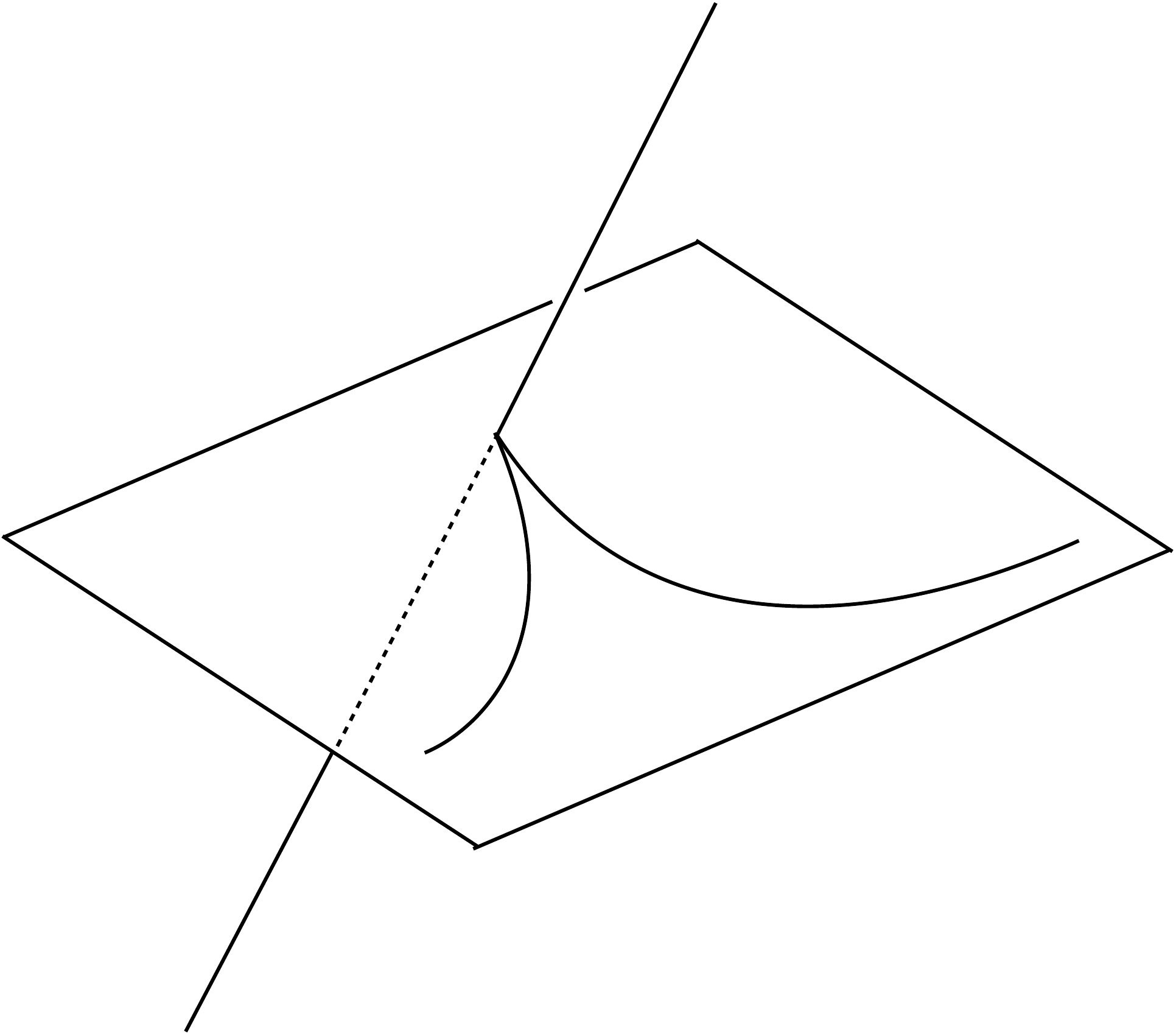}
\caption{The fibre $F$.}
\label{figure: cusp with a line}
\end{figure}
\end{center}

Such a curve is not of Kodaira type and so there is no elliptic surface having it as fibre. We could deduce this fact also from the following observation: the tangent space to the singular point of $F$ has dimension $3$, so we need at least an elliptic threefold to have a smooth total space having $F$ among its fibres. However, we can not obtain $F$ as a contraction of a Kodaira fibre, and so no smooth equidimensional elliptic threefold for which the morphism to the Weierstrass model is crepant can have such a singular fibre.
\end{Ex}

\begin{Ex}\label{ex: resolution of non-cdv}
In this Example we want to show how we can explicitly resolve the Weierstrass fibration defined by the equation
\[W: y^2 = x^3 + s^4 x + t^6,\]
which defines a threefold in $\A^4_{(s, t, x, y)}$ with an isolated singularity at the origin. This singularity is rational Gorenstein, but not of $cDV$-type by Proposition~\ref{prop: classification of rational gorenstein threefold singularities}. The resolution we are going to show is described also in \cite[Thm.\ 2.11]{C3-f} and introduces a crepant exceptional divisor over the singular point. By \cite[Thm.\ 5.35]{Kollar-Mori}, this singularity has no small resolutions.

Consider in $\A^4_{(s, t, x, y)} \times \PP^{(1, 1, 2, 3)}_{(S: T: X: Y)}$ the subvariety $V$ of dimension $4$ defined by
\[V = \overline{\left\{ ((s, t, x, y), (S: T: X: Y)) \st \begin{array}{l} (s, t, x, y) \neq (0, 0, 0, 0),\\ (s: t: x: y) = (S: T: X: Y) \end{array} \right\}},\]
and let $\beta$ be the restriction to $V$ of the projection on $\A^4$. Then
\begin{enumerate}
\item The fibre over a point $(s, t, x, y) \neq (0, 0, 0, 0)$ is the point $((s, t, x, y), (s: t: x: y))$.
\item The fibre over $(0, 0, 0, 0)$ is $\PP^{(1, 1, 2, 3)}$.
\end{enumerate}
Then $V$ is a sort of \emph{weighted blow up} of $\A^4$ at the origin (cf.\ \cite[$\S$4]{C3-f}, where it is called the \emph{$\alpha$-blow up} of $\A^4$), and we claim that the strict transform $\widetilde{W}$ of $W$ in $V$ is a crepant resolution of the singularities of $W$, which introduces a divisor over the singular point.

There is a nice description of $V$ using toric geometry. The affine space $\A^4$ is the toric variety associated to the cone $\Cone(e_1, e_2, e_3, e_4) \subseteq \R^4$, where the $e_i$'s are the standard base vectors. We consider the vector $\alpha = (1, 1, 2, 3)$ and define the fan $\Sigma$ whose maximal cones are
\[\Cone(e_1, e_2, e_3, \alpha), \quad \Cone(e_1, e_2, e_4, \alpha), \quad \Cone(e_1, e_3, e_4, \alpha), \quad \Cone(e_2, e_3, e_4, \alpha):\]
then $V$ is the toric variety associated to the fan $\Sigma$, and $\Sigma \hookrightarrow \Cone(e_1, e_2, e_3, e_4)$ gives the projection $V \longrightarrow \A^4$. From this toric picture, we can also describe $V$ as a quotient space: we have that
\[V = \left( \A^5_{(S, T, X, Y, w)} \smallsetminus \{ S = T = X = Y = 0 \} \right) / \sim\]
where $\sim$ is the equivalence relation induced by the action of $\C^*$ on $\A^5 \smallsetminus \{ S = T = X = Y = 0 \}$
\[\lambda \cdot (S, T, X, Y, w) = (\lambda S, \lambda T, \lambda^2 X, \lambda^3 Y, \lambda^{-1} w).\]
Using the global homogeneous toric coordinates $(S: T: X: Y: w)$ on $V$, the projection on $\A^4$ is
\[(S: T: X: Y: w) \longmapsto (s, t, x, y) = (S w, T w, X w^2, Y w^3)\]
and so we see that over a point $(s, t, x, y) \neq 0$ there is only the point $(s: t: x: y: 1)$, while over $(0, 0, 0, 0)$ we have the divisor $w = 0$ in $V$, which is isomorphic to the weighted projective space $\PP^{(1, 1, 2, 3)}$.\\
We want now to give a description of $\beta: \widetilde{W} \longrightarrow W$ in local coordinates on $V$: from the quotient description we have that $V$ is covered by four local charts, corresponding to the open subsets where $S$, respectively $T$, $X$, $Y$, are non-zero.
\begin{enumerate}
\item[Chart $S \neq 0$] This affine chart is smooth, and the projection to $\A^4$ is
\begin{equation}\label{equation: blow up S neq 0}
(T, X, Y, w) \longmapsto (s, t, x, y) = (w, T w, X w^2, Y w^3):
\end{equation}
the strict transform $\widetilde{W}$ of $W$ via~\eqref{equation: blow up S neq 0} is then
\[\widetilde{W}: Y^2 = X^3 + X + T^6,\]
which is smooth. Observe that we have
\[\left\{ \begin{array}{l}
ds = dw\\
dt = w dT + T dw\\
dx = w^2 dX + 2wX dw\\
dy = w^3 dY + 3w^2Y dw
\end{array} \right.\]
and so the pull back of the residue
\[-\frac{ds \wedge dt \wedge dx}{2 y},\]
which generates $\omega_W$, is the $3$-form
\[-\frac{dT \wedge dX \wedge dw}{2 Y}.\]
This last is a generator for $\omega_{\widetilde{W}}$, and so~\eqref{equation: blow up S neq 0} is crepant. To conclude that $\beta: \widetilde{W} \longrightarrow W$ is crepant we only need to show that $\widetilde{W}$ is smooth, since the part of $\widetilde{W}$ which is not described in this chart is of codimension $2$.
\item[Chart $T \neq 0$] This affine chart is completely analogous to the previous one.
\item[Chart $X \neq 0$] This affine chart is singular, in fact it is isomorphic to the quotient of $\A^4$ by the action
\[(S, T, Y, w) \longmapsto (\pm S, \pm T, \pm Y, \pm w).\]
The projection to $\A^4$ is
\[(S, T, Y, w) \longmapsto (S w, T w, w^2, Y w^3)\]
and so $\widetilde{W}$ is defined in this chart by
\[\widetilde{W}: Y^2 = 1 + S^4 + T^6,\]
which does not pass through the singular point of the chart, and is in fact smooth.
\item[Chart $Y \neq 0$] Also this affine chart is singular, and it is isomorphic to the quotient of $\A^4$ by the action
\[(S, T, X, w) \longmapsto (\zeta_3^2 S, \zeta_3^2 T, \zeta_3 X, \zeta_3 w), \qquad \zeta_3^3 = 1, \zeta_3 \neq 1.\]
The projection to $\A^4$ is
\[(S, T, X, w) \longmapsto (S w, T w, w^2 X, w^3)\]
and so $\widetilde{W}$ is defined in this chart by
\[\widetilde{W}: 1 = X^3 + S^4 X + T^6,\]
which does not pass through the singular point of the chart, and is in fact smooth.
\end{enumerate}
\end{Ex}

\bibliographystyle{alpha}
\bibliography{BiblioSing}

\end{document}